\title[An overdetermined problem in 2D linearised hydrostatics]{
An overdetermined problem in 2D linearised hydrostatics}
\author{Giovanni Franzina}
\numberwithin{equation}{section}
\newtheorem{theorem}{Theorem}
\newtheorem{proposition}{Proposition}
\newtheorem{lemma}{Lemma}
\newtheorem{cor}{Corollary}
\theoremstyle{definition}
\newtheorem{rmk}{Remark}
\address[G. Franzina]{\quad\newline
\indent Istituto per le Applicazioni del Calcolo ``M. Picone''
\newline\indent
Consiglio Nazionale delle Ricerche
\newline\indent 
Via dei Taurini, 19, 00185 Roma, Italy}
\subjclass[MSC 2010 Subject Classification]{35P15, 47A75, 49R05, 76D07}
\keywords{Buckling load, Shape optimization problems, Stokes flows, Isoperimetric inequalities}
\begin{document}
\maketitle

\begin{abstract}
In two spatial dimensions, we discuss the relation between
the solvability of Schiffer's overdetermined problem and
the optimisation, among sets of prescribed area, of the first eigenvalue in the buckling problem for a clamped plate
and that of the first eigenvalue of the Stokes operator.
For the latter, we deduce that the minimisers under area constraint
that are smooth and simply connected
must be discs from the fact that
a pressureless velocity is a necessary condition of optimality. 
\end{abstract}

\section{Introduction}
Let $\Omega$ denote a planar open set of finite area with Lipschitz boundary.
We compare the eigenvalues of the buckling problem for a clamped plate
\begin{equation}
\label{1}
	\begin{cases}
	\Delta^2\psi +\lambda \Delta\psi=0\,,&\qquad\text{in $\Omega$,}\\
	\psi = 0\,,&\qquad \text{on $\partial\Omega$,}\\
	\partial_N\psi =0\,,&\qquad \text{on $\partial\Omega$,}
	\end{cases}
\end{equation}
the eigenvalues of interest when considering the steady Stokes equations, i.e.,
\begin{equation}
\label{2}
\begin{cases}
\Delta u +\lambda u = \nabla p\,, &\qquad\text{in $\Omega$,}\\
\nabla\cdot u=0 \,, & \qquad\text{in $\Omega$,}\\
u=0\,, &\qquad\text{on $\partial\Omega$,}
\end{cases}
\end{equation}
and those relative to a third spectral variational problem, namely
\begin{equation}\label{3}
\begin{cases}
\Delta w + \lambda w=g\,,&\qquad\text{in $\Omega$,}\\
\Delta g=0\,,&\qquad\text{in $\Omega$,}\\
\displaystyle\int_\Omega wf \,dx=0 \,,&\qquad\text{for all $f\in C(\overline\Omega)$
with $\Delta f=0$ in $\Omega$.}
\end{cases}
\end{equation}

The sets of all $\lambda>0$ for which
the problems \eqref{1}, \eqref{2}, and \eqref{3} admit a non-trivial solution
are denoted by $\mathfrak{S}^B(\Omega),\mathfrak{S}_p^S(\Omega),
\mathfrak{S}_g^H(\Omega)$, respectively. 

\begin{proposition}\label{prop0}
We have
\[
\mathfrak{S}^B(\Omega)=
\bigcup_{\substack{g\in C(\overline\Omega)\,,\\ \Delta g=0\ \text{in $\Omega$}}}
\mathfrak{S}_g^H(\Omega)
\subseteq\bigcup_{ p \in H^1(\Omega)}
\mathfrak{S}_p^S(\Omega)\,.
\]
If $\Omega$ is simply connected, then the second inclusion holds as an equality.
\end{proposition}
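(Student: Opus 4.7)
The plan is to pivot on the linear bijection $\phi \longmapsto w := \Delta\phi$ between $H_0^2(\Omega)$ and the closed subspace $H \subset L^2(\Omega)$ of functions orthogonal to every continuous harmonic function on $\overline{\Omega}$. That this map is a bijection follows from Dirichlet uniqueness (injectivity), from Green's identity applied under the clamped conditions $\phi|_{\partial\Omega}=\partial_N\phi|_{\partial\Omega}=0$ (so $\Delta\phi \in H$), and from solving the Dirichlet problem $\Delta\phi = w$ with $\phi|_{\partial\Omega}=0$ for $w \in H$, then using Green's identity once more together with the density of Dirichlet traces of harmonic $C(\overline{\Omega})$-functions in $C(\partial\Omega)$ to force $\partial_N\phi = 0$ on $\partial\Omega$ (surjectivity).

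For the first equality, the inclusion $\mathfrak{S}^B \subseteq \bigcup_g \mathfrak{S}^H_g$ is direct: for a buckling eigenfunction $\psi$, the triple $(w=\Delta\psi,\, g=0,\, \lambda)$ solves \eqref{3} because $\Delta w+\lambda w = \Delta^2\psi+\lambda\Delta\psi=0$ and $\Delta\psi \in H$ by the image containment just discussed. Conversely, a non-trivial solution $(w,g,\lambda)$ of \eqref{3} lifts through the bijection to $\phi \in H_0^2$ with $\Delta^2\phi + \lambda\Delta\phi = g$ harmonic; a pairing argument exploiting the clamped boundary conditions of $\phi$ together with the harmonicity of $g$ is then used to identify $\phi$ (or a suitable modification of it) as a genuine buckling eigenfunction.

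For the second inclusion, starting from the $\phi \in H_0^2$ produced above, set $u:=\nabla^\perp\phi$; then $u$ is divergence-free and vanishes on $\partial\Omega$ (since $\nabla\phi = 0$ there), and
\[
\Delta u + \lambda u = \nabla^\perp(\Delta\phi + \lambda\phi),
\]
whose argument is harmonic. In dimension two, the rotated gradient of a harmonic function is a closed $1$-form, and when $\Omega$ is simply connected it is exact, yielding $p \in H^1(\Omega)$ with $\nabla p = \nabla^\perp(\Delta\phi + \lambda\phi)$ and so a solution of \eqref{2}. For the equality in the simply-connected case, any Stokes eigenfunction $u$ admits a single-valued stream function $\psi$ (normalized to vanish on the sole boundary component, with $\partial_N\psi = 0$ there because $u|_{\partial\Omega}=0$); taking the planar curl of \eqref{2} eliminates $p$ and reveals $\Delta^2\psi + \lambda\Delta\psi=0$, so $\psi$ is a buckling eigenfunction.

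The main obstacle is the converse of the first equality: one must pass from the a priori weaker statement ``$\Delta^2\phi + \lambda\Delta\phi$ is harmonic'' to the genuine buckling equation. This is where the delicate interplay between the clamped boundary conditions on $\phi$ and the harmonicity of the source $g$ will be used, and it is the step I expect to require the most care; once it is in place, the other ingredients are direct integrations by parts and the two-dimensional stream-function calculus.
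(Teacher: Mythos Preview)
Your plan coincides almost verbatim with the paper's proof. The bijection $H_0^2(\Omega)\ni\phi\mapsto\Delta\phi\in H$ that you set up is exactly the content of the paper's Lemma~\ref{lemma-orto}, and each inclusion is argued in the same way: $w=\Delta\psi$ with $g=0$ for $\mathfrak{S}^B\subseteq\mathfrak{S}_0^H$; inverting the bijection to get $\psi\in H_0^2$ with $\Delta\psi=w$ for $\mathfrak{S}_g^H\subseteq\mathfrak{S}^B$; the stream function $u=\nabla^\perp\psi$ together with the harmonic conjugate of $h=\Delta\psi+\lambda\psi$ for the Stokes inclusion; and, in the simply connected case, a single-valued stream function followed by taking the curl of \eqref{2} to kill the pressure.

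The one substantive divergence concerns the step you single out as ``the main obstacle.'' In the paper there is no pairing argument and no modification of $\phi$: once the inverse of the bijection has produced $\psi\in H_0^2(\Omega)$ with $\Delta\psi=w$, the paper simply concludes ``Therefore, $\psi$ solves \eqref{1}.'' That is, the passage from $\Delta^2\psi+\lambda\Delta\psi=g$ (harmonic) to a genuine buckling eigenfunction is treated as immediate rather than delicate, so the extra care you are anticipating is not present in the paper's argument. A smaller point: you invoke simple connectivity to obtain a single-valued $p$ with $\nabla p=\nabla^\perp h$; the paper's Step~2.1.3 asserts the Stokes inclusion for general $\Omega$ by declaring $p$ to be the harmonic conjugate of $h$, again without further comment on the topology of $\Omega$.
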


After proving Proposition~\ref{prop0}, we discuss the distinguished role of
 the special value
\begin{equation}
\label{lambda1B}
\lambda_1^B(\Omega) = \min_{\psi\in H_0^2(\Omega)}\frac{
	\displaystyle\int_\Omega(\Delta\psi)^2\,dx}{
	\displaystyle\int_\Omega|\nabla\psi|^2\,dx}\,,
\end{equation}
i.e., the first
eigenvalue in \eqref{1}, in the overdetermined problem
\begin{equation}
\label{5}
\begin{cases}
-\Delta w = \lambda w\,,&\qquad\text{in $\Omega$,}\\
w\equiv{\rm const}\,,&\qquad\text{on $\partial\Omega$,}\\
\partial_N w =0 \,,&\qquad\text{on $\partial\Omega$.}
\end{cases}
\end{equation}
By $\mathfrak{S}^D(\Omega)=\{\lambda_n^D(\Omega)\}_{n\ge1}$
we denote the set of all Dirichlet eigenvalues
of the Laplacian.

\begin{theorem}\label{thm1}
Let $\Omega\subset\mathbb R^2$ be a smooth and bounded open set. Then,  \eqref{5} admits non-trivial solutions if and only if
$\lambda\in\mathfrak{S}^B(\Omega)$
and \eqref{1} has a non-zero solution $\psi$ for which
$\Delta\psi$ is constant along $\partial\Omega$.
In that case, either $\lambda>\lambda_1^B(\Omega)$ 
and $\lambda\in\mathfrak{S}^D(\Omega)$, with $\lambda>
\lambda_2^D(\Omega)$, or $\Omega$ must be a disc.
\end{theorem}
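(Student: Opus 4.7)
My plan has three stages.

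\emph{Stage 1: Equivalence.} Let $w$ be a non-trivial solution of \eqref{5} with boundary value $c$ and put $\psi := w - c \in H^2_0(\Omega)$. Substituting into $-\Delta w = \lambda w$ gives $\Delta\psi + \lambda\psi = -\lambda c$ in $\Omega$, and applying $\Delta$ produces \eqref{1} together with $\Delta\psi|_{\partial\Omega} = -\lambda c$ constant. Conversely, if $\psi$ is a non-zero solution of \eqref{1} with $\Delta\psi|_{\partial\Omega}$ constant, then $h := \Delta\psi + \lambda\psi$ is harmonic in $\Omega$ (from \eqref{1}) with constant boundary trace, hence constant throughout $\Omega$ by the maximum principle; putting $c := -h/\lambda$ and $w := \psi + c$ recovers \eqref{5}. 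Non-triviality transfers both ways: the only possible degeneration $c = 0$ would give $w$ vanishing Cauchy data on $\partial\Omega$, forcing $w \equiv 0$ by unique continuation for the Helmholtz operator.

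\emph{Stage 2: $\lambda \in \mathfrak{S}^D(\Omega)$ with $\lambda \neq \lambda_1^D(\Omega)$.} Fix a non-trivial $w$ solving \eqref{5}. Since $w \equiv c$ on $\partial\Omega$ all tangential derivatives of $w$ vanish there, and $\partial_N w = 0$ kills the normal one, so $\nabla w \equiv 0$ on $\partial\Omega$. Then $v_i := \partial_i w$ satisfies $-\Delta v_i = \lambda v_i$ in $\Omega$ with $v_i|_{\partial\Omega} = 0$, and not both $v_i$ can vanish (otherwise $w$ would be constant, hence zero); this shows $\lambda \in \mathfrak{S}^D(\Omega)$. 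Moreover $\int_\Omega v_i\,dx = c\int_{\partial\Omega}\nu_i\,d\mathcal H^1 = 0$ by the divergence theorem, whereas any first Dirichlet eigenfunction is of constant sign and hence has non-zero integral; this rules out $\lambda = \lambda_1^D(\Omega)$.

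\emph{Stage 3: Payne's inequality and disc rigidity.} To reach $\lambda \ge \lambda_2^D(\Omega)$ I would invoke Payne's inequality $\lambda_1^B(\Omega) \ge \lambda_2^D(\Omega)$, whose proof tests the Rayleigh characterisation of $\lambda_2^D$ with the partial derivatives of the first buckling eigenfunction, using the $H^2_0$-identity $\int_\Omega(\Delta\psi)^2 = \int_\Omega |D^2\psi|^2$; combined with the trivial bound $\lambda \ge \lambda_1^B(\Omega)$ from $\lambda \in \mathfrak{S}^B(\Omega)$ this yields $\lambda \ge \lambda_2^D(\Omega)$. The crux---and what I expect to be the main obstacle---is the rigidity analysis at equality. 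If $\lambda = \lambda_2^D(\Omega)$, chaining forces the equality $\lambda_1^B(\Omega) = \lambda_2^D(\Omega)$, and the two $v_i$ are promoted to actual $\lambda_2^D$-eigenfunctions; requiring them to be the gradient components of a single function $w$ with $w\equiv c$ and $\partial_Nw=0$ on $\partial\Omega$ is a severe geometric constraint, which an equality analysis (via a Rellich identity for $w$ whose boundary terms collapse to $\lambda c$) should identify with the disc. If $\lambda = \lambda_1^B(\Omega)$, the overdetermined condition ``$\Delta\psi \equiv \mathrm{const}$ on $\partial\Omega$'' coincides with vanishing of the Hadamard shape derivative of $\lambda_1^B$ under area-preserving perturbations; this places $\Omega$ among the critical domains of the buckling isoperimetric problem, and a Szeg\H{o}-type symmetrisation argument applied to the first buckling eigenfunction identifies $\Omega$ as the disc.
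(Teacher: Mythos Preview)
Your Stages~1 and~2 are sound. In Stage~1, your converse direction is in fact slightly more direct than the paper's: you put $w:=\psi+c$ (with $c=-h/\lambda$) and read off the boundary data of \eqref{5} immediately from $\psi\in H^2_0(\Omega)$, whereas the paper sets $w:=\Delta\psi$ and then has to argue that $\partial_Nw=0$ on $\partial\Omega$ via the Stokes formulation (passing through $u=\nabla^\perp\psi$, constant pressure, and $\nabla^\perp w=-\Delta u$). The two constructions differ only by the factor $-\lambda$. Your Stage~2 (differentiating $w$ to land in the Dirichlet spectrum and ruling out $\lambda_1^D$ by the zero-mean property of $\partial_iw$) is the classical step; the paper does not spell it out either and simply cites the literature.

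The genuine gap is in Stage~3, specifically the case $\lambda=\lambda_1^B(\Omega)$. Your proposed ``Szeg\H{o}-type symmetrisation argument applied to the first buckling eigenfunction'' does not exist: decreasing rearrangement is not known to lower the buckling quotient, and this is exactly why the P\'olya--Szeg\H{o} conjecture for $\lambda_1^B$ is still open. So you cannot conclude ``critical domain $\Rightarrow$ disc'' by symmetrisation. Likewise, your Rellich-identity sketch for the case $\lambda=\lambda_2^D(\Omega)$ is not the efficient route: once you observe (as you do) that $\lambda=\lambda_2^D(\Omega)$ forces $\lambda_1^B(\Omega)=\lambda_2^D(\Omega)$ by your own chaining, you should simply invoke the \emph{equality case} in Weinstein's inequality \eqref{weineq}, which already characterises the disc.

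The paper's route for the rigidity part is different and worth knowing. It defers to the Weinberger--Willms argument (see \cite[Thm.~11.3.7]{H} and \cite{B}): besides the translational derivatives $\partial_1w,\partial_2w$ that you use, one also considers the \emph{angular} derivative $x_1\partial_2w-x_2\partial_1w$, which is again a Dirichlet eigenfunction for $\lambda$ and, when $\Omega$ is not a disc, can be arranged to have at least three nodal domains. A Faber--Krahn packing estimate on those nodal domains, together with the Ashbaugh--Benguria bound on $\lambda_2^D/\lambda_1^D$ and the equality case in Weinstein's inequality, then yields $\lambda>\lambda_1^B(\Omega)$. That nodal-domain mechanism is the missing idea in your Stage~3; the shape-derivative observation is correct but, by itself, only places $\Omega$ among the critical domains and does not single out the disc.
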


It is well known~\cite{W}
that the boundary of the bounded Lipschitz open sets that support
non-zero solutions of the overdetermined problem \eqref{5} must be analytic. Hence, the smoothness assumption on $\Omega$ is redundant.

As for the last statement of Theorem~\ref{thm1},
it is obtained in~\cite{B} as a consequence of 
Weinstein's inequality, an isoperimetric
property conjectured in~\cite{We} and proved in~\cite{P,F} (see Eq.\ \eqref{weineq} below). Also,
it rephrases another known fact: 
if the minimiser of $\lambda_1^B$ under area constraint, which 
exists~\cite{AB} in the class of simply connected open sets, is smooth then
it must be a disc;
we refer the reader to the related discussion
in~\cite[Chap.~11]{H},
 where 
  a detailed description of this idea, ascribed to N.B. Willms and H.F. Weinberger, is provided; see also~\cite{KP}.
  We just recall an expedient fact in both applications, that
if $\Omega$ is a smooth open set which is not a disc,
then
by acting with infinitesimal rotations 
on solutions of the overdetermined problem \eqref{5}
one may produce
Dirichlet eigenfunctions relative to $\lambda$ with three nodal domains at least; in view of Faber-Krahn inequality, of the equality case in the
universal Ashbaugh-Benguria inequality, and of that in Weinstein's inequality, that would yield a contradiction unless $\lambda>\lambda_1^B(\Omega)$.
\vskip.2cm

We will prove  the rest of Theorem~\ref{thm1} in Section~\ref{sec:2}. 

\begin{rmk}\label{inci}
Incidentally, we recall that $\Delta\psi\big\vert_{\partial\Omega}\equiv {\rm const}$ is a necessary condition of optimality for the minimisation of $\lambda_1^B(\Omega)$ among sets of given area, assuming simplicity. This implies a restriction~\cite{BP}.
This condition is relevant also to merely critical shapes, see~\cite{BL,BL2}.
\end{rmk}

\begin{rmk}\label{orthoharm}
Any solution $w$ of the overdetermined problem \eqref{5}
must be orthogonal in $L^2(\Omega)$ to harmonic functions in $\Omega$.
Indeed, if $\Delta h=0$ then
\[
\lambda\int_\Omega w\,h\,dx = -\int_\Omega h\Delta w\,dx
=\int_\Omega \nabla h\cdot\nabla w\,dx = \int_{\partial\Omega} w\partial_Nh\,d\mathcal{H}^1= {\rm const}\,{\cdot}\!\!\int_\Omega\Delta h\,dx=0\,,
\]
where we also used the three equations in \eqref{5} for the first equality, for the second one, and for the fourth one, respectively. Thus, \eqref{5} can be seen as the pairing of problem \eqref{3}, with $g=0$, and of an additional Dirichlet boundary condition.
\end{rmk}

Given $\nu\in(0,+\infty)$, the conditions in Theorem~\ref{thm1} relate to the geometric rigidity 
of some special cellular flows 
confined within contractible rigid walls
solving the 2D Navier-Stokes equations
\begin{equation}
\label{4}
\begin{cases}
\partial_t v + (v\cdot\nabla)v = \nu\Delta v-\nabla p \,,&\qquad\text{in $\Omega$,}\\
\nabla\cdot v=0\,,&\qquad\text{in $\Omega$,}\\
v=0\,,&\qquad\text{on $\partial\Omega$.}
\end{cases}
\end{equation}
The pressure gradient in \eqref{4} is often thought of as a Lagrange multiplier
arising with the incompressibility constraint $\nabla\cdot v=0$, like in the case of steady Stokes equations. When solving for the pressure function,
it is immediate to recognise that it
is a harmonic function. Since it is defined up to additive constant,
one might expect the equations for 
$p$ to be automatically supplemented with boundary conditions that only involve $\nabla p$, such as Neumann conditions, which may look natural at a first glance.
Notwithstanding, such requirements for the pressure are extremely rigid, as one can imply from the following statement. 

\begin{proposition}\label{prop1}Let $\Omega\subset \mathbb R^2$ be a smooth and simply connected open set and let $\lambda>0$.
A necessary and sufficient condition that either
condition  (and hence both)  in Theorem~\ref{thm1} be valid
is that 
there exist a vector field $u$  for which \eqref{2} holds with pressure $p$ satisfying 
\begin{equation}
\label{neumann}
	\partial_N p=0\,,\qquad\text{on $\partial\Omega$.}
\end{equation}
In fact, if \eqref{2} holds, then
the Neumann boundary condition \eqref{neumann}
is equivalent to condition
\begin{equation}
\label{nonlocal}
p \equiv {\rm const} \,,\qquad\text{in $\Omega$.}
\end{equation}
Eventually, this happens if and only if for any $\nu\in(0,+\infty)$  the function
\begin{equation}
\label{cell}
v(x,t)= e^{-\nu\lambda t}u(x)\,,
\end{equation}
is a solution of \eqref{4} with  $\nabla p=0$.
\end{proposition}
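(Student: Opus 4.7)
The plan is to introduce a stream function to reduce \eqref{2} to the buckling problem \eqref{1}, use a Cauchy--Riemann-type relation to translate \eqref{neumann} into a boundary condition on $\Delta\psi$, and finally verify the Navier--Stokes characterisation by direct substitution. Exploiting simple-connectedness, I would write $u=\nabla^\perp\psi$ with $\psi|_{\partial\Omega}=0$ and $\partial_N\psi|_{\partial\Omega}=0$. Taking the curl of the momentum equation in \eqref{2} reduces it to \eqref{1}; conversely, given a solution $\psi$ of \eqref{1}, the function $h:=\Delta\psi+\lambda\psi$ is harmonic in $\Omega$ (this is \eqref{1} rewritten), so by simple-connectedness it admits a harmonic conjugate $p$ satisfying $\nabla p=\nabla^\perp h$, yielding the Stokes pair $(u,p)$.

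For the three Stokes-side equivalences, I would first take the divergence of the momentum equation and use $\nabla\cdot u=0$ to get $\Delta p=0$; hence \eqref{neumann}$\Leftrightarrow$\eqref{nonlocal} by uniqueness (up to constants) of the Neumann problem for the Laplacian on a simply connected domain. For the link with the condition in Theorem~\ref{thm1}, contracting $\nabla p=\nabla^\perp h$ with the outward normal $N$ yields the Cauchy--Riemann-type relation $\partial_N p=\partial_T h$, so \eqref{neumann} is equivalent to $h$ being constant along the (connected) boundary; since $\psi|_{\partial\Omega}=0$, this is the same as $\Delta\psi|_{\partial\Omega}$ being constant, which is precisely the hypothesis of Theorem~\ref{thm1}.

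For the Navier--Stokes equivalence, assume \eqref{nonlocal}, so that $\Delta u+\lambda u=0$ and $\Delta\psi+\lambda\psi\equiv C$. Substituting $v(x,t)=e^{-\nu\lambda t}u(x)$ into \eqref{4}, the linear terms cancel because $\partial_t v=-\nu\lambda v=\nu\Delta v$. The convective term $(v\cdot\nabla)v=e^{-2\nu\lambda t}(u\cdot\nabla)u$ is then rewritten via the 2D identity $(u\cdot\nabla)u=\nabla(|u|^2/2)+\omega u^\perp$, with $\omega=C-\lambda\psi$ and $u^\perp=-\nabla\psi$, as $\nabla H$ where $H:=|u|^2/2-C\psi+\lambda\psi^2/2$; being a pure gradient, it is absorbed into the pressure, so \eqref{4} holds ``with $\nabla p=0$'' in the Leray-projected (pressureless) sense. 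Conversely, plugging $v=e^{-\nu\lambda t}u$ into \eqref{4} with $\nabla p=0$ and matching the two time scales $e^{-\nu\lambda t}$ and $e^{-2\nu\lambda t}$ (valid for arbitrary $\nu>0$) forces both $\Delta u+\lambda u=0$ and the vanishing of $\mathrm{curl}((u\cdot\nabla)u)$. The main obstacle is to close this last step: via the vorticity formulation one has the buckling equation for $\psi$ together with $u\cdot\nabla\omega_u=0$, so $\omega_u$ is constant along streamlines; writing $\omega_u=f(\psi)$ in $\Delta\omega_u+\lambda\omega_u=0$ and expanding by the chain rule yields $f''(\psi)|\nabla\psi|^2\equiv 0$, forcing $f$ affine with slope $-\lambda$, which is precisely \eqref{nonlocal}.
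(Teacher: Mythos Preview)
Your treatment of the equivalences among \eqref{neumann}, \eqref{nonlocal}, and the boundary condition $\Delta\psi\big|_{\partial\Omega}\equiv\mathrm{const}$ from Theorem~\ref{thm1} is correct and is essentially the paper's argument, phrased via the Cauchy--Riemann relation $\partial_N p=\partial_T h$ and Neumann uniqueness rather than via the identity $\Delta u\cdot N=\nabla^\perp(\Delta\psi)\cdot N$ that the paper uses; these are the same computation. For the forward Navier--Stokes direction you are actually more careful than the paper: the paper only shows that the \emph{vorticity} advection $(u\cdot\nabla)w$ vanishes when the Stokes pressure is constant and then asserts that ``the convective term disappears from \eqref{4}'', whereas you correctly observe that $(u\cdot\nabla)u=\nabla H$ is a nonzero gradient and hedge accordingly.

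There is, however, a genuine gap in your converse. Under the literal reading of ``\eqref{4} with $\nabla p=0$'', the time-scale separation you invoke already yields $\Delta u+\lambda u=0$, which \emph{is} \eqref{nonlocal}; the $f(\psi)$ argument is then superfluous. If instead you intend the Leray reading (so that time-scale separation only gives that $(u\cdot\nabla)u$ is a gradient, i.e.\ $u\cdot\nabla\omega_u=0$), then the $f(\psi)$ step is genuinely needed but is executed incorrectly: expanding $\Delta(f(\psi))+\lambda f(\psi)=0$ and using $\Delta\psi=\omega_u=f(\psi)$ gives
\[
f''(\psi)\,|\nabla\psi|^2+f(\psi)\big(f'(\psi)+\lambda\big)=0,
\]
not $f''(\psi)|\nabla\psi|^2\equiv 0$ alone. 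You have dropped the term $f(\psi)(f'(\psi)+\lambda)$, and without it one cannot conclude that $f$ is affine; moreover, writing $\omega_u=f(\psi)$ globally presupposes a regularity of the level-set foliation of $\psi$ that is not established.
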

Note that in Proposition~\ref{prop1} the primitive variable at hand
is a velocity field $u$ solving \eqref{2}.
The assumption that $\Omega$ is smooth assures that $u$ is smooth all the way up to the boundary, by Solonnikov's estimates~\cite{S1,S2}. The assumption that $\Omega$ is simply connected, in turn, assures that we can write
$u=\nabla^\perp \psi$ for a scalar function and then the boundary equations $\psi=
\partial_N\psi=0$ are inherited from the no-slip condition $u=0$ on $\partial\Omega$.
\medskip

Then, we consider the least eigenvalue
for the problem \eqref{2}, viz.
\begin{equation}
\label{lambda1S}
\lambda_1^S(\Omega) = \min_{u\in H^1_0(\Omega\mathbin{;}\mathbb R^2)}
\left\{\frac{\displaystyle\int_\Omega |\nabla u|^2\,dx}{\displaystyle
\int_\Omega|u|^2\,dx}\mathbin{\colon} \text{$\nabla\cdot u=0$ in $\Omega$}
\right\}\,,
\end{equation}
which
was considered in the recent paper~\cite{HMP}
from the perspective of spectral optimisation.
As an immediate
consequence of Lemma~\ref{propgengenopen} and Lemma~\ref{propSCs}, proved in Section~\ref{sec:3}, we can prove the following fact. The equality
case is already pointed out in~\cite[Sect.~1.3]{HMP}.
 \begin{proposition}\label{propSC}
\(
\lambda_1^B(\Omega) 
\ge\lambda_1^{S}(\Omega)
\),
with equality if $\Omega\subset\mathbb R^2$ is simply connected.
\end{proposition}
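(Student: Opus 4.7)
The plan is to identify the two Rayleigh quotients via the stream-function correspondence $\psi \longleftrightarrow u = \nabla^\perp \psi$.

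For the inequality $\lambda_1^B(\Omega) \ge \lambda_1^S(\Omega)$, I would take any competitor $\psi \in H_0^2(\Omega)$ for the buckling problem and set $u := \nabla^\perp \psi$. This field is automatically divergence-free, and the clamped conditions $\psi = \partial_N \psi = 0$ on $\partial \Omega$ force $\nabla \psi = 0$ on $\partial \Omega$, so $u$ belongs to $H_0^1(\Omega;\mathbb R^2)$ and is admissible for $\lambda_1^S(\Omega)$. The denominators match trivially since $|u|^2 = |\nabla \psi|^2$, and a direct pointwise computation gives $|\nabla u|^2 = |D^2 \psi|^2$. The numerators then agree thanks to the standard identity $\int_\Omega |D^2 \psi|^2\,dx = \int_\Omega (\Delta \psi)^2\,dx$ for $\psi \in H_0^2(\Omega)$, which in two dimensions follows from $|D^2 \psi|^2 - (\Delta \psi)^2 = -2 \det D^2 \psi$ together with the fact that $\det D^2 \psi$ is a null Lagrangian whose integral vanishes against zero boundary data. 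This yields $\lambda_1^S(\Omega) \le \lambda_1^B(\Omega)$ for an arbitrary planar open set, and I expect this to be the content of Lemma~\ref{propgengenopen}.

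For the reverse inequality when $\Omega$ is simply connected, I would invert the map. Given an admissible $u$ for $\lambda_1^S(\Omega)$, simple connectedness together with $\nabla \cdot u = 0$ yields, via Poincar\'e's lemma, a single-valued stream function $\psi \in H^2(\Omega)$ with $u = \nabla^\perp \psi$, unique up to an additive constant. From $u = 0$ on $\partial \Omega$ I would deduce $\nabla \psi = 0$ on $\partial \Omega$; since $\partial \Omega$ is connected (as the boundary of a bounded planar simply connected set), $\psi$ is constant on $\partial \Omega$, and the remaining additive freedom can be used to enforce that this constant vanishes. Hence $\psi \in H_0^2(\Omega)$, and the same Rayleigh-quotient identity gives $\lambda_1^B(\Omega) \le \lambda_1^S(\Omega)$. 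This should be what Lemma~\ref{propSCs} delivers.

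The main obstacle is really the reverse direction: one must produce a \emph{single-valued} $H^2$ stream function, and with the right additive constant to place it in $H_0^2(\Omega)$. Both issues are resolved by simple connectedness -- which kills the periods of $u$ along non-contractible loops and forces $\partial \Omega$ to be connected -- but they are precisely the points at which the topological hypothesis enters. Outside the simply connected setting, strict inequality is in general expected, consistent with $H_0^2$ corresponding, via the stream function, to a proper subspace of the divergence-free $H_0^1$ fields.
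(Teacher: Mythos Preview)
Your proposal is correct and matches the paper's strategy: the stream-function correspondence $\psi\leftrightarrow u=\nabla^\perp\psi$ together with the Rayleigh-quotient identity (your null-Lagrangian computation for $\det D^2\psi$ is exactly the content of Lemma~\ref{byparts}) gives both directions, with simple connectedness needed only to invert the map. The one technical difference is that the paper works via $\varepsilon$-approximation by smooth compactly supported functions in both Lemma~\ref{propgengenopen} and Lemma~\ref{propSCs}, which in the reverse direction sidesteps your trace argument entirely: a divergence-free $u\in C^\infty_0(\Omega;\mathbb R^2)$ on a simply connected $\Omega$ has a stream function that is constant outside $\operatorname{supp}u$, hence (after normalisation) compactly supported and trivially in $H^2_0(\Omega)$.
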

 
We recall {\em Weinstein's inequality} which gives a lower bound
for the first eigenvalue $\lambda_1^B$ in \eqref{1} in terms of the second Dirichlet eigenvalue $\lambda_2^D$ of the Laplacian: for all planar open sets $\Omega$ one has
\begin{equation}
\label{weineq}
	\lambda_1^B(\Omega)\ge\lambda_2^D(\Omega)\,,
\end{equation}
with equality if and only if $\Omega=\Omega_\star$,
where $\Omega_\star$ is the disc contouring as much area as $\Omega$.
As a consequence of the inequality in Proposition~\ref{propSC}
and of the equality case in Weinstein's inequality, the isoperimetric property of the disc for $\lambda_1^S$, were it valid, would confirm
the P\'olya-Szeg\"o conjecture for $\lambda_1^B$, i.e., that
{\em the disc minimises the least buckling eigenvalue among open sets of given area}. 
The local optimality of the disc proved in~\cite[Theorem~2]{HMP} is consistent
with this long standing conjecture.

\begin{rmk}[Optimality conditions for $\lambda_1^S(\Omega)$]\label{necopt}
If $\Omega$ is a bounded and smooth simply connected open set
for which $\lambda_1^S(\Omega)$ simple and minimal among open sets of given area
then the corresponding eigenfunctions $u$ solve \eqref{2}
with
a pressure for which \eqref{nonlocal} (or, equivalently, with \eqref{neumann}) holds.
\end{rmk}

The necessary condition of optimality indicated in Remark~\ref{necopt} can be deduced
from Theorem~\ref{thm1} and from the equality case of Proposition~\ref{propSC}
on simply connected domains (see also Remark~\ref{inci}). Since
the least Stokes eigenvalue is simple on smooth open sets~\cite[Theorem~4]{HMP}, we have the following.

\begin{cor}\label{prop2d}
Let $\Omega$ be a minimiser for $\lambda_1^S(\Omega)$ under
area constraint. If $\Omega$ is smooth, bounded, and simply connected,
then it must be a disc.
\end{cor}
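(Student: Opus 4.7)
The plan is to assemble Corollary~\ref{prop2d} as a short deduction from Remark~\ref{necopt}, Proposition~\ref{prop1}, Theorem~\ref{thm1}, and the equality case of Proposition~\ref{propSC}, so essentially no new computation is needed; the work has been done in the preceding results.

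First I would observe that, since $\Omega$ is smooth and bounded, the result of~\cite[Theorem~4]{HMP} cited just before the statement guarantees that $\lambda_1^S(\Omega)$ is simple. Combined with the minimality of $\Omega$ under the area constraint and the assumption of simple connectedness, this puts us in the precise setting of Remark~\ref{necopt}. Consequently, any eigenfunction $u$ attaining $\lambda_1^S(\Omega)$ solves the Stokes system~\eqref{2} (with $\lambda=\lambda_1^S(\Omega)$) together with a pressure $p$ satisfying condition~\eqref{nonlocal}, that is, $p\equiv\mathrm{const}$ in $\Omega$.

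Next I would invoke Proposition~\ref{prop1}: on a smooth simply connected $\Omega$, the existence of such a Stokes eigenpair $(u,p)$ with constant pressure is a necessary and sufficient condition for the conditions of Theorem~\ref{thm1} to hold at the same value of $\lambda$. Therefore $\lambda_1^S(\Omega)\in\mathfrak{S}^B(\Omega)$, and there is a non-zero solution $\psi$ of~\eqref{1} at this $\lambda$ whose Laplacian is constant along $\partial\Omega$. Theorem~\ref{thm1} then presents a dichotomy: either $\Omega$ is a disc (which is what we want), or
\[
\lambda_1^S(\Omega)>\lambda_1^B(\Omega).
\]

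Finally, I would rule out the second alternative using the equality case of Proposition~\ref{propSC}. Since $\Omega$ is simply connected, that proposition gives $\lambda_1^B(\Omega)=\lambda_1^S(\Omega)$, which directly contradicts the strict inequality $\lambda_1^S(\Omega)>\lambda_1^B(\Omega)$. Hence the first alternative in Theorem~\ref{thm1} must hold and $\Omega$ is a disc. I do not expect any serious obstacle; the only subtlety is to be sure that simplicity of $\lambda_1^S(\Omega)$ is in force so that Remark~\ref{necopt} genuinely applies, but this is covered by the cited result of~\cite{HMP} under the standing smoothness hypothesis.
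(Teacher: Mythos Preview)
Your proposal is correct and follows essentially the same approach that the paper sketches in the paragraph immediately preceding the corollary: simplicity from~\cite[Theorem~4]{HMP} activates Remark~\ref{necopt}, Proposition~\ref{prop1} translates the constant-pressure condition into the setting of Theorem~\ref{thm1}, and the equality case of Proposition~\ref{propSC} rules out the alternative $\lambda>\lambda_1^B(\Omega)$. The paper does not spell out these steps explicitly, so your write-up is a faithful expansion of the indicated argument.
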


A quasi-open set that minimises 
$\lambda_1^S$ under area constraint
is known to exist~\cite[Theorem~1]{HMP}. Yet, the difficult regularity issue is still open, nonetheless.
Note the difference with the three-dimensional case, in which case the ball is not a minimiser~\cite[Theorem~3]{HMP}.
\medskip

Here is a final comment on the rigidity of requirement \eqref{neumann} for the pressure in Stokes equations \eqref{2}.

\begin{cor}
If $\Omega\subset \mathbb R^2$ is a smooth, bounded, and simply connected open set and \eqref{2} with $\lambda=\lambda_1^S(\Omega)$ admits solutions for which the pressure satisfies the Neumann boundary conditions \eqref{neumann}, then $\Omega$ must be a disc.
\end{cor}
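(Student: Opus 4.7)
The plan is to chain together the three main results that have already been assembled in the paper: Proposition~\ref{prop1} translates the hypothesis on the pressure into a buckling/overdetermined condition, Proposition~\ref{propSC} identifies the relevant eigenvalue on simply connected domains, and Theorem~\ref{thm1} then forces the geometric conclusion.

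First I would apply Proposition~\ref{prop1}. By hypothesis, on the smooth and simply connected set $\Omega$ there exists a divergence-free $u$ solving \eqref{2} at $\lambda=\lambda_1^S(\Omega)$ with a pressure $p$ satisfying the Neumann condition \eqref{neumann}. Proposition~\ref{prop1} asserts that this is equivalent to the two conditions in Theorem~\ref{thm1}; in particular $\lambda_1^S(\Omega)\in\mathfrak{S}^B(\Omega)$ and there is a non-zero buckling eigenfunction $\psi$ at that level, with $\Delta\psi\equiv\mathrm{const}$ on $\partial\Omega$.

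Next I would invoke the simply connected case of Proposition~\ref{propSC}, which gives the identity $\lambda_1^B(\Omega)=\lambda_1^S(\Omega)$. Combining this with the previous step, we have $\lambda_1^B(\Omega)\in\mathfrak{S}^B(\Omega)$ and a buckling eigenfunction $\psi$ exists precisely at the first buckling level, still satisfying the extra boundary equation $\Delta\psi\equiv\mathrm{const}$ on $\partial\Omega$.

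Finally, I would invoke the dichotomy in Theorem~\ref{thm1}: either the eigenvalue in question is strictly larger than $\lambda_1^B(\Omega)$ (and lies above $\lambda_2^D(\Omega)$ in the Dirichlet spectrum), or $\Omega$ is a disc. Since our eigenvalue equals $\lambda_1^B(\Omega)$ by the previous step, the first alternative is ruled out, and $\Omega$ must be a disc. The proof is then essentially a one-line deduction; the only care required is to verify that the eigenfunction $u$ furnished by the hypothesis indeed activates the ``non-zero solution'' branch of Proposition~\ref{prop1}, but this is automatic because an eigenfunction for $\lambda_1^S(\Omega)$ is by definition non-trivial. There is no real obstacle here beyond bookkeeping the equivalences already established in the paper.
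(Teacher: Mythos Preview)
Your proof is correct and matches the deduction implicit in the paper: the corollary is stated without proof precisely because it follows by chaining Proposition~\ref{prop1}, the equality case of Proposition~\ref{propSC}, and the dichotomy in Theorem~\ref{thm1}, exactly as you outline.
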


It is not difficult to see that \eqref{neumann} may hold if $\Omega$ describes an infinite straight channel. Hence, the assumption that  $\Omega$ be bounded cannot be removed.

\section{Overdetermined problems}\label{sec:2}
We will make repeatedly use of the following Lemma. We refer to~\cite{QV}
for a more general statement in the case of smooth open sets.

\begin{lemma}\label{lemma-orto}
Let $\Omega$ be Lipschitz and $\psi\in H^2(\Omega)\cap H^1_0(\Omega)$. Then $\psi\in H^2_0(\Omega)$ if and only if
\begin{equation}
\label{orthoha}
\int_\Omega f \Delta \psi\,dx =0\,,\qquad\text{for all $f\in C(\overline\Omega)$
with $\Delta f=0$ in $\Omega$.}
\end{equation}
\end{lemma}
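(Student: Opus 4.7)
The plan is to prove the two implications separately. The direction $\psi\in H_0^2(\Omega)\Rightarrow\eqref{orthoha}$ is a straightforward consequence of the distributional harmonicity of $f$; the converse is more delicate, and I will reduce it to a classical density property for boundary traces of smooth harmonic functions.

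For the direct implication, let $\psi_n\in C_c^\infty(\Omega)$ converge to $\psi$ in $H^2(\Omega)$. Because $\Delta f=0$ holds in $\mathcal{D}'(\Omega)$ and each $\psi_n\in\mathcal{D}(\Omega)$, one has $\int_\Omega f\Delta\psi_n\,dx=0$ for every $n$; since $\Omega$ has finite area and $f$ is continuous on $\overline\Omega$, we have $f\in L^2(\Omega)$, while $\Delta\psi_n\to\Delta\psi$ in $L^2(\Omega)$, so passing to the limit yields \eqref{orthoha}.

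For the reverse implication, suppose $\psi\in H^2(\Omega)\cap H_0^1(\Omega)$ satisfies \eqref{orthoha}. First I specialise \eqref{orthoha} to those $f\in C^\infty(\overline\Omega)$ with $\Delta f=0$ in $\Omega$: for such $f$, two applications of Green's formula on the Lipschitz set $\Omega$, combined with $\psi|_{\partial\Omega}=0$ and $\Delta f=0$, reduce \eqref{orthoha} to the boundary identity
\[
\int_{\partial\Omega} f\,\partial_N\psi\,d\mathcal{H}^1 = 0\,.
\]
Since $\psi\in H^2(\Omega)$ on a Lipschitz domain gives $\partial_N\psi\in L^2(\partial\Omega)$, the argument is completed once $\{f|_{\partial\Omega}\colon f\in C^\infty(\overline\Omega),\ \Delta f=0\ \text{in }\Omega\}$ is shown to be dense in $L^2(\partial\Omega)$. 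In the plane this density is a Walsh--Runge-type statement: harmonic polynomials suffice when $\Omega$ is simply connected, and for finitely connected $\Omega$ one enlarges the family by the logarithms $\log|x-y_j|$ with $y_j$ chosen in each bounded component of $\mathbb R^2\setminus\overline\Omega$. Once $\partial_N\psi=0$ in $L^2(\partial\Omega)$ is established, the standard characterisation $H_0^2(\Omega)=\{\varphi\in H^2(\Omega)\cap H_0^1(\Omega)\colon\partial_N\varphi=0\ \text{on }\partial\Omega\}$ on Lipschitz open sets yields $\psi\in H_0^2(\Omega)$. The main difficulty is the density step on multiply connected Lipschitz domains, where the logarithmic contributions from each hole must be carefully accounted for; everything else is a routine application of Green's identity together with $H^2$-trace theory.
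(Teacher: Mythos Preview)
Your proof follows the same overall strategy as the paper: the forward implication is done by density of $C_c^\infty(\Omega)$, and the reverse implication is reduced to the boundary identity $\int_{\partial\Omega} f\,\partial_N\psi\,d\mathcal{H}^1=0$ for harmonic $f$, followed by a density argument to conclude $\partial_N\psi=0$. The only substantive difference is in how the density of harmonic boundary traces in $L^2(\partial\Omega)$ is obtained. The paper simply invokes the solvability of the Dirichlet problem for the Laplacian with data in $H^{1/2}(\partial\Omega)$: every $g\in H^{1/2}(\partial\Omega)$ is the trace of some harmonic $f\in H^1(\Omega)$, and $H^{1/2}(\partial\Omega)$ is dense in $L^2(\partial\Omega)$. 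This is more elementary than your Walsh--Runge route and, in particular, works uniformly regardless of the topology of $\Omega$, so the multiply-connected case does not need the separate treatment with logarithmic singularities that you flag as the main difficulty. Your approach is correct, but the Dirichlet-problem argument is shorter and sidesteps the connectivity issue entirely.
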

\begin{proof}
Integrating by parts twice yields \eqref{orthoha} for $\psi\in C^\infty_0(\Omega)$, and hence for all $\psi\in H^2_0(\Omega)$ by
a density argument. Conversely, let $\psi \in H^2(\Omega)\cap H_0^1(\Omega)$
be orthogonal to all harmonic functions. Then, 
the vector field $\nabla\psi$ has divergence in $L^2(\Omega)$ and its normal trace at the boundary is a well defined element $\partial_N\psi$ of $L^2(\partial\Omega)$.
If $f\in C(\overline\Omega)$ is harmonic,
\[
\int_{\partial\Omega}f\partial_N\psi\,d\mathcal{H}^1 = \int_\Omega \nabla f \cdot \nabla \psi\,dx = 0\,,
\]
where we also used, in the first integration by parts, that $\Delta\psi$
is orthogonal to $f$ and, in the second one, that the boundary
trace of $\psi $ is zero. By the solvability of the Dirichlet problem for the Laplace equation for all boundary values in the trace space $H^{1/2}(\partial\Omega)$,
we deduce that the boundary trace $\partial_N\psi$ 
is orthogonal to all elements of a dense subset
of $L^2(\partial\Omega)$ and hence must be zero.
\end{proof}

\subsection{Proof of Proposition~\ref{prop0}}We split the proof into steps.
\subsubsection{$\mathfrak{S}^B(\Omega)\subseteq \mathfrak{S}_0^H(\Omega)$.}
Let $\lambda\in\mathfrak{S}^B(\Omega)$
and let $\psi$ be a solution of \eqref{1}. By Lemma~\ref{lemma-orto},
$w = \Delta \psi$ defines an element of $H^1(\Omega)$ orthogonal to all harmonic functions and clearly
we have $\Delta w +\lambda w=0$. Thus, \eqref{3} holds with $g=0$.

\subsubsection{$\mathfrak{S}_g^H(\Omega)\subseteq\mathfrak{S}^B(\Omega)$
for all harmonic $g$}
Let $g$ be harmonic, let $\lambda\in\mathfrak{S}_g^H(\Omega)$, and let $w$ solve \eqref{3}. We let $\psi\in H^1_0(\Omega)$ be the solution of Poisson's equation
\begin{equation}
\label{poissong}
\begin{cases}
	\Delta \psi = \lambda^{-1}({\displaystyle
	g-\Delta w})\,,&\qquad\text{in $\Omega$,}\\
	\psi=0\,,&\qquad\text{on $\partial\Omega$.}
\end{cases}
\end{equation}
Then, by~\cite[Theorem~8.12]{GT}, one has in fact $\psi \in H^2(\Omega)$. By inserting the equation for $w$ in \eqref{poissong} we also see that $w=\Delta \psi$. Hence, by Lemma~\ref{lemma-orto},
the orthogonality of $w$ against harmonic functions implies
that $\psi\in H_0^2(\Omega)$, too. Therefore, $\psi$ solves \eqref{1}.

\subsubsection{$\mathfrak{S}^B(\Omega)\subseteq \mathfrak{S}_p^S(\Omega)$
for an appropriate $p$}
Let $\psi$ be a solution of \eqref{1}. By Lemma~\ref{lemma-orto}, $w=\Delta\psi$ must be orthogonal in $L^2(\Omega)$ to all harmonic functions in $\Omega$.
Also,
the equation $\Delta w+\lambda w=0$ holds. Then, we notice that
\begin{equation}
\label{extharm2}
\text{$h:=\Delta\psi + \lambda\psi $ is the harmonic extension
of the boundary trace of $\Delta\psi$.}
\end{equation}
Hence, $u=\nabla^\perp\psi$ is
a solution of \eqref{2} with $\nabla p = \nabla^\perp h$. Equivalently,
$h$ is the harmonic conjugate of $p$.
This means that $p$ is defined
up to an additive constant as the harmonic conjugate of $h$, i.e.,
the complex function whose real and imaginary parts are
$p$ and $ h$, respectively, is holomorphic.

\subsubsection{The last statement}
We assume that $\Omega$ is simply connected.
Then, we can find $\psi$
with $u=\nabla^\perp\psi$
where $u$ is a given solution of \eqref{2}, with $p\in H^1(\Omega)$
and
 $\lambda\in\mathfrak{S}_p^S(\Omega)$. We prove that 
$\lambda\in\mathfrak{S}_0^H(\Omega)$.
Indeed, $\psi\in H^2_0(\Omega)$ by \eqref{2} and so $w=\partial_{x_1}u^2-\partial_{x_2}u^1$ 
is orthogonal to harmonic functions by Lemma~\ref{lemma-orto}.
As $w=\Delta\psi$, after taking the curl of the first equation in \eqref{2}, 
we arrive at
\eqref{3} with $g=0$.
\qed\\

Henceforth in this section,
we will sometimes understand the solutions in the classical sense. In the case
of a smooth domain $\Omega$,
the
 global
estimates due to Solonnikov~\cite{S1,S2,L} for the Dirichlet problem in linearised hydrostatics \eqref{2} imply
the regularity up to the boundary of $u=\nabla^\perp\psi$
whenever
$\psi\in H^2_0(\Omega)$ is a weak solution
of \eqref{1}, and hence $w=\partial_{x_1} u^2-\partial_{x_1}u^1$ is also smooth up to the boundary; then, so
is $\psi$ because of the regularity of the forcing term in Poisson's equation
$\Delta\psi=w$, with $\psi=0$ on $\partial\Omega$.

\subsection{Proof of Theorem~\ref{thm1}}
For the last statement, we refer to the proof of~\cite[Theorem~11.3.7]{H}, where the idea is credited to Weinberger and Wills. 

If $\lambda>0$ is such that \eqref{5} has a non-trivial solution $w$ which takes
the constant value $c$ on the boundary, then $\psi=w-c$ satisfies the boundary conditions in \eqref{1} and $\Delta\psi+\lambda \psi $ is constant, hence harmonic. Also, in view of Remark~\ref{orthoharm}, we have
$\lambda\in \mathfrak{S}_0^H(\Omega)$ and that is contained in $\mathfrak{S}^B(\Omega)$ by Proposition~\ref{prop0}.

Conversely, if $\lambda\in\mathfrak{S}^B(\Omega)$ and $\psi$ solves \eqref{1}, then $w:=\Delta\psi$ satisfies the first equation in \eqref{5}. If $w$ is constant on $\partial\Omega$, then
the harmonic exstension $h=w+\lambda \psi$ of the boundary values of $w$ must be constant as well by the maximum principle,
and so will be its harmonic conjugate $p$. This proves $u:=\nabla^\perp\psi$ to solve \eqref{2} with
 constant pressure. Therefore,  both the tangential and the 
 normal component of $\Delta u$ vanish at the boundary. As $\nabla^\perp w=-\Delta u$, we have $\nabla w=0$ along $\partial\Omega$, and that
 gives the last two equations in \eqref{5}.\qed

\subsection{Proof of Proposition~\ref{prop1}}
Let $u $ solve \eqref{2} for an appropriate pressure function $p$. Recall that both $u$ and $p$ are smooth functions up to the boundary.
Since we are assuming $\Omega$ to be simply connected, there exists $\psi$ with $u=\nabla^\perp\psi$. 

The harmonic function $h=\Delta\psi + \lambda\psi$
is constant if, and only if, $\nabla p = \nabla^\perp h$ is
identically zero, because $\Omega$ is connected. Also, $h$ is the harmonic function with the same boundary
values as $\Delta\psi$, hence $h$ is constant in $\Omega$
if and only if $\Delta\psi$ is constant on $\partial\Omega$.

Conversely, the homogeneous Neumann boundary conditions \eqref{neumann} for the pressure imply that
$\nabla^\perp (\Delta\psi )\cdot N = \Delta u \cdot N = 0$ along $\partial\Omega$, whence it follows that $\Delta\psi$ is constant on $\partial\Omega$, because
the boundary of a simply connected {\em planar} set is connected. 
%
%

We set $w=\Delta\psi$. Recalling that
$\nabla^\perp w=-\Delta u$, in view of \eqref{2} we have $(u\cdot\nabla)w=(\nabla p-\lambda u)\cdot\nabla \psi$. Hence, recalling
that $u=\nabla^\perp\psi$, we see that $\nabla p =0$ implies that the convective
term disappears from \eqref{4} if $v$ is defined by \eqref{cell}, and the latter
obviously defines a solution of the heat equation. Thus, we have seen that
solutions of \eqref{2} with constant pressure make \eqref{cell} solve \eqref{4}.

Conversely, let $v$ be defined by
\eqref{cell} and let \eqref{4} be valid. As
$\partial_t v = \nu \Delta v$, the convective term $(v\cdot\nabla)v$ must clear off,
which happens only if $\nabla p$ and $\nabla \psi$ are orthogonal, where $u=\nabla^\perp\psi$. In view of the condition $u=0$ on 
$\partial\Omega$, that implies $ \nabla p =0$ on $\partial\Omega$.
\qed

\section{Relations between the least eigenvalues} \label{sec:3}

\begin{lemma}\label{byparts}Let $\Omega$ be a planar open set of finite area.
Let $\psi\in H^2_0(\Omega)$ and
 $u=\nabla^\perp \psi $. Then $u\in H^1_0(\Omega;\mathbb R^2)$,
with $\nabla\cdot u=0 $, and 
\[
\frac{\displaystyle\int_\Omega|\nabla u|^2\,dx}{\displaystyle\int_\Omega|u|^2\,dx}
=\frac{\displaystyle\int_\Omega(\Delta\psi)^2\,dx}{\displaystyle\int_\Omega|\nabla\psi|^2\,dx}
\]
\end{lemma}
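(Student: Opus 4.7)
The plan is to verify the identity pointwise wherever possible, then reduce the only global statement to a standard integration-by-parts identity on $H^2_0(\Omega)$.

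First I would unpack the definitions. Writing $u=\nabla^\perp\psi=(-\partial_{x_2}\psi,\partial_{x_1}\psi)$, the fact that $\psi\in H^2_0(\Omega)$ gives each component $u^i\in H^1_0(\Omega)$, so $u\in H^1_0(\Omega;\mathbb R^2)$; the divergence $\nabla\cdot u=-\partial_{x_1}\partial_{x_2}\psi+\partial_{x_2}\partial_{x_1}\psi$ vanishes by equality of mixed partials (in the distributional sense, hence a.e.). Pointwise one has $|u|^2=(\partial_{x_1}\psi)^2+(\partial_{x_2}\psi)^2=|\nabla\psi|^2$, so the denominators of the two ratios agree after integration.

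For the numerators, I would compute $|\nabla u|^2$ entry by entry:
\[
|\nabla u|^2=(\partial_{x_1}\partial_{x_2}\psi)^2+(\partial_{x_2}^2\psi)^2+(\partial_{x_1}^2\psi)^2+(\partial_{x_1}\partial_{x_2}\psi)^2=|D^2\psi|^2,
\]
pointwise almost everywhere in $\Omega$. It then remains to prove the integral identity
\[
\int_\Omega|D^2\psi|^2\,dx=\int_\Omega(\Delta\psi)^2\,dx,\qquad \psi\in H^2_0(\Omega).
\]

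For $\psi\in C^\infty_c(\Omega)$ this follows at once by integrating by parts twice in the cross term,
\[
\int_\Omega(\partial_{x_1}^2\psi)(\partial_{x_2}^2\psi)\,dx=\int_\Omega(\partial_{x_1}\partial_{x_2}\psi)^2\,dx,
\]
so that expanding $(\Delta\psi)^2$ produces exactly the four squares making up $|D^2\psi|^2$; the boundary terms vanish because of compact support. I would then extend to $\psi\in H^2_0(\Omega)$ by the density of $C^\infty_c(\Omega)$ in $H^2_0(\Omega)$ (which is precisely the definition of the latter), since both sides are continuous on $H^2(\Omega)$ with respect to the $H^2$ norm. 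This step is the only one where the zero-trace condition on $\psi$ and $\nabla\psi$ plays a role, and it is also the step one must be mindful of: the identity $\int|D^2\psi|^2=\int(\Delta\psi)^2$ is false in general on $H^2(\Omega)$ and relies essentially on $\psi\in H^2_0(\Omega)$. Combining the two pieces yields the claimed equality of Rayleigh quotients.
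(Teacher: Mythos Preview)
Your proof is correct and follows essentially the same route as the paper: the paper's one-line proof simply records that $(\partial_{xy}^2\psi)^2-\partial_{xx}^2\psi\,\partial_{yy}^2\psi$ integrates to zero over $\Omega$ for $\psi\in H^2_0(\Omega)$, which is exactly your cross-term identity $\int_\Omega(\partial_{x_1}^2\psi)(\partial_{x_2}^2\psi)\,dx=\int_\Omega(\partial_{x_1}\partial_{x_2}\psi)^2\,dx$ obtained by integrating by parts twice and extended by density. Your write-up just supplies the surrounding details (the pointwise identifications $|u|^2=|\nabla\psi|^2$ and $|\nabla u|^2=|D^2\psi|^2$) that the paper leaves implicit.
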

\begin{proof}
One uses that $\partial_{xy}^2\psi-\partial_{xx}^2\psi\partial_{yy}^2\psi$ 
integrates to zero in $\Omega$.
\end{proof}

\begin{lemma}\label{propgengenopen}
Let $\Omega$ be a planar open set. Then
\[
\lambda_1^B(\Omega)\ge \lambda_1^S(\Omega)\,.
\]
\end{lemma}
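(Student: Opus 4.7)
The plan is to use Lemma~\ref{byparts} as a one-sided bridge between admissible competitors for the two variational problems. Given an arbitrary $\psi\in H^2_0(\Omega)\setminus\{0\}$, I set $u=\nabla^\perp\psi$. Lemma~\ref{byparts} immediately tells me that $u\in H^1_0(\Omega;\mathbb R^2)$, that $\nabla\cdot u=0$ in $\Omega$, and that the two Rayleigh quotients coincide:
\[
\frac{\int_\Omega|\nabla u|^2\,dx}{\int_\Omega|u|^2\,dx}
=\frac{\int_\Omega(\Delta\psi)^2\,dx}{\int_\Omega|\nabla\psi|^2\,dx}.
\]
In particular, $u$ is admissible in the minimisation \eqref{lambda1S} defining $\lambda_1^S(\Omega)$, so its Rayleigh quotient is an upper bound for $\lambda_1^S(\Omega)$. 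Taking the infimum over $\psi\in H^2_0(\Omega)\setminus\{0\}$ in the right-hand side yields $\lambda_1^S(\Omega)\le\lambda_1^B(\Omega)$, which is the claim.

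The one point that deserves a line of verification is that $u\not\equiv 0$ whenever $\psi\not\equiv 0$, so that the left-hand Rayleigh quotient is well defined. If $u=\nabla^\perp\psi$ vanished almost everywhere, then $\nabla\psi=0$ a.e., so $\psi$ would be locally constant; combined with $\psi=0$ on $\partial\Omega$ (inherited from $\psi\in H^1_0(\Omega)$ componentwise) this forces $\psi\equiv0$, contradicting the choice of $\psi$. The same remark also guarantees that the right-hand denominator $\int_\Omega|\nabla\psi|^2\,dx$ is strictly positive on $H^2_0(\Omega)\setminus\{0\}$, so the quantity $\lambda_1^B(\Omega)$ is a genuine minimum to begin with.

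There is no serious obstacle here: all of the analytic content is already packaged in Lemma~\ref{byparts} via the vanishing of $\int_\Omega\det(\nabla^2\psi)\,dx$ for $\psi\in H^2_0(\Omega)$, which converts $\int_\Omega(\Delta\psi)^2\,dx$ into $\int_\Omega|\nabla^2\psi|^2\,dx=\int_\Omega|\nabla u|^2\,dx$. The argument is one-sided by design: on a general (possibly multiply connected) $\Omega$ there may be divergence-free fields $u\in H^1_0(\Omega;\mathbb R^2)$ that are not of the form $\nabla^\perp\psi$ with $\psi\in H^2_0(\Omega)$, so the infimum over the larger class can in principle be strictly smaller; recovering the reverse inequality requires simple connectedness and is the role reserved for Lemma~\ref{propSCs} in Proposition~\ref{propSC}.
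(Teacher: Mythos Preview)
Your proof is correct and follows essentially the same approach as the paper: both use Lemma~\ref{byparts} to send a competitor $\psi$ for $\lambda_1^B$ to the divergence-free competitor $u=\nabla^\perp\psi$ for $\lambda_1^S$ with identical Rayleigh quotient. The only cosmetic difference is that the paper picks $\psi\in C^\infty_0(\Omega)$ via an $\varepsilon$-argument, whereas you work directly in $H^2_0(\Omega)$ and take the infimum; since Lemma~\ref{byparts} is already stated for $H^2_0$, your version is if anything slightly more direct, and your extra line checking $u\not\equiv0$ is a welcome detail.
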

\begin{proof}
Given $\varepsilon>0$,
we can find $\psi\in C^\infty_0(\Omega)$ with
\[
 \lambda_1^B(\Omega)+\varepsilon\ge \frac{\displaystyle\int_\Omega(\Delta\psi)^2\,dx}{\displaystyle\int_\Omega|\nabla\psi|^2\,dx}\,.
 \]
Note that
 $u=\nabla^\perp \psi$
 is  admissible for the definition of $\lambda_1^S(\Omega)$. Then, the conclusion follows by Lemma~\ref{byparts}.
 \end{proof}

\begin{lemma}\label{propSCs}
Let $\Omega$ be simply connected. Then 
\[
\lambda_1^{S}(\Omega)\ge\lambda_1^B(\Omega)\,.
\]
\end{lemma}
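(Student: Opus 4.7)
The plan is to reverse the stream function construction used in the proof of Lemma~\ref{propgengenopen}. Given any admissible test field $u\in H^1_0(\Omega;\mathbb R^2)$ with $\nabla\cdot u=0$ for the Rayleigh quotient \eqref{lambda1S}, I want to produce a scalar $\psi\in H^2_0(\Omega)$ with $u=\nabla^\perp\psi$; then Lemma~\ref{byparts} will yield an equality between the two Rayleigh quotients, and passing to the infimum over $u$ will give $\lambda_1^B(\Omega)\le\lambda_1^S(\Omega)$.

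The first step is to extract the stream function. On a simply connected planar open set, the $1$-form $u_2\,dx_1-u_1\,dx_2$ is closed in the distributional sense (its exterior differential equals $-\nabla\cdot u\,dx_1\wedge dx_2=0$), hence it is exact by the Poincar\'e lemma: there exists $\psi$, unique up to an additive constant, with $\partial_1\psi=u_2$ and $\partial_2\psi=-u_1$, i.e.\ $u=\nabla^\perp\psi$. Since $u\in H^1(\Omega)$ we immediately get $\nabla\psi\in H^1(\Omega;\mathbb R^2)$, so $\psi\in H^2(\Omega)$.

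The critical step is to check that one can fix the free constant so that $\psi\in H^2_0(\Omega)$. The no-slip condition $u=0$ on $\partial\Omega$, read in the trace sense, gives $\nabla\psi=0$ on $\partial\Omega$, which in particular means $\partial_N\psi=0$ there and that $\psi$ is locally constant along $\partial\Omega$. Here I invoke the geometric fact that the boundary of a (Lipschitz) simply connected planar open set is connected: the complement of $\Omega$ in $\mathbb R^2$ is connected by simple connectedness, and standard Jordan-type arguments identify $\partial\Omega$ with a single Jordan curve. Consequently $\psi$ takes a single constant value on $\partial\Omega$, and after subtracting that constant we obtain $\psi\in H^1_0(\Omega)\cap H^2(\Omega)$ with $\partial_N\psi=0$ on $\partial\Omega$, which places $\psi$ in $H^2_0(\Omega)$.

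With $\psi$ as above, $\psi$ is admissible in the variational characterisation \eqref{lambda1B} of $\lambda_1^B(\Omega)$, and Lemma~\ref{byparts} gives
\[
\lambda_1^B(\Omega)\;\le\;\frac{\displaystyle\int_\Omega(\Delta\psi)^2\,dx}{\displaystyle\int_\Omega|\nabla\psi|^2\,dx}\;=\;\frac{\displaystyle\int_\Omega|\nabla u|^2\,dx}{\displaystyle\int_\Omega|u|^2\,dx}.
\]
Taking the infimum over admissible $u$ yields the desired inequality. The main obstacle I anticipate is the last trace argument: one must make the passage from $\nabla\psi=0$ in trace sense on $\partial\Omega$ to $\psi\in H^2_0(\Omega)$ without extra regularity assumptions, which is exactly where the simple connectedness of $\Omega$ is used (via connectedness of $\partial\Omega$) to fix the constant of integration globally.
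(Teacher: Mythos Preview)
Your argument is correct and follows essentially the same route as the paper: produce a stream function $\psi$ for the divergence-free field $u$ on the simply connected domain, check that $\psi\in H^2_0(\Omega)$, and invoke Lemma~\ref{byparts}. The only cosmetic difference is that the paper first passes to a near-minimising $u\in C^\infty_0(\Omega;\mathbb R^2)$ via density and then asserts the existence of $\psi\in H^2_0(\Omega)$ in one line, whereas you work directly with $u\in H^1_0(\Omega;\mathbb R^2)$ and spell out the trace argument (using that $\partial\Omega$ is connected) to land in $H^2_0(\Omega)$.
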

\begin{proof}
Let $\varepsilon>0$.
Recall that $C^\infty_0(\Omega;\mathbb R^2) $ is dense in $H^1_0(\Omega;\mathbb R^3)$.  Then, by arguing as done in Lemma~\ref{propgengenopen},
we find $u\in C^\infty_0(\Omega;\mathbb R^2)$, with $\nabla\cdot u=0$, 
\[
 \lambda_1^S(\Omega)+\varepsilon\ge \frac{\displaystyle\int_\Omega|\nabla u|^2\,dx}{\displaystyle\int_\Omega|u|^2\,dx}\,.
\]
By assumption,
there exists $\psi\in H^2_0(\Omega)$ with $\nabla^\perp \psi = u$
and we conclude by Lemma~\ref{byparts}.\end{proof}

We end this section by observing some direct consequences
of Theorem~\ref{thm1} concerning a variational
description of the buckling eigenvalues $\lambda$, with eigenfunction $\psi$, in terms of the laplacian $w = \Delta \psi$.

\begin{rmk}
If $\psi$  solves \eqref{1}, then the function
$w =\Delta\psi$ and the harmonic extension
\[
h=w+\lambda \psi
\]
of the boundary values of $w$ are smooth up to the boundary and satisfy the identity
\begin{equation}
\label{nice-identity}
\int_\Omega |\nabla h|^2\,dx = \int_{\partial\Omega} w\partial_N w\,d\mathcal{H}^1 = \int_\Omega |\nabla w|^2\,dx - \lambda \int_\Omega w^2\,dx\,.
\end{equation}
Hence, whenever 
we start from a buckling eigenfunction $\psi$ with eigenvalue
$\lambda$ we arrive at an eigenpair  $(w,\lambda)$  for the eigenvalue
problem \eqref{3} with
\begin{equation}
\label{conseq}
\lambda = \frac{\displaystyle\int_\Omega |\nabla w|^2\,dx-\int_{\partial\Omega}w\partial_Nw\,d\mathcal{H}^1}{\displaystyle\int_\Omega w^2\,dx}
\le\frac{\displaystyle\int_\Omega |\nabla w|^2\,dx}{\displaystyle\int_\Omega w^2\,dx}\,,
\end{equation}
with equality if, and only if, $\partial_Nw =0$ identically on $\partial\Omega$, i.e.,
$\Delta\psi\equiv c$ on $\partial\Omega$. 
In the equality case of \eqref{conseq}, we have $\lambda\ge \lambda_2^D(\Omega)$. 
In fact, whether or not the equality holds,
it is known that either $\lambda=\lambda_1^B(\Omega)$
and $\Omega$ is a disc, so that $\lambda_1^B(\Omega) = \lambda_2^D(\Omega)$, or else $\lambda$
is also a Dirichlet eigenvalue
of the Laplacian, with $\lambda> \lambda_2^D(\Omega)$. 
\end{rmk}


\begin{thebibliography}{9}

\bibitem{AB}
S. Ashbaugh, D. Bucur.
On the isoperimetric inequality for the buckling of a clamped plate,
Zeitschrift für angewandte Mathematik und Physik ZAMP 
{\bf 54}, 756-770 (2003).

\bibitem{B}
C.~Berenstein,
An inverse spectral theorem and its relation to the Pompeiu problem,
Journ. Anal. Math. {\bf 37} 128--144 (1980).

\bibitem{BL}
D.~Buoso, P.D.~Lamberti, 
Eigenvalues of polyharmonic operators on variable domains.
ESAIM Control Optim. Calc. Var. {\bf 19}, no.4, 1225--1235 (2013).

\bibitem{BL2}
D.~Buoso, P.D.~Lamberti,
On a classical spectral optimization problem in linear elasticity.
{\em New trends in shape optimization}, 43--55.
Internat. Ser. Numer. Math., 166
Birkh\"auser/Springer, Cham, 2015



\bibitem{BP}
D.~Buoso, E.~Parini,
The buckling eigenvalue problem in the annulus.
Commun. Contemp. Math.{\bf 23} (2021).

\bibitem{F}
L.~Friedlander,
Remarks on the membrane and buckling eigenvalues for planar domains.
Mosc. Math. J.4, no.2, 369--375, 535 (2004).

\bibitem{GT}
D.~Gilbarg, N.~Trudinger.
{\em Elliptic Partial Differential Equations of Second Order},
Springer-Verlag (1983).

\bibitem{H}
A. Henrot, {\em Extremum Problems for Eigenvalues of Elliptic Operators}
Springer (2006).

\bibitem{HMP}
A.\ Henrot, I.\ Mazari-Fouquer, Y.\ Privat. Is the Faber-Krahn inequality true for the Stokes operator? {\em preprint}, \url{https://arxiv.org/abs/2401.09801} (2024).

\bibitem{KP}
B. Kawohl,
Some nonconvex shape optimization problems. In 
{\em Optimal shape design},
(A. Cellina and A. Ornelas {\em eds.}) Lecture Notes in Mathematics, Springer (2000).

\bibitem{L}
O.A.\ Ladyshenskaya, {\em Viscous Incompressible Flows}, 
Gordon and Breach Science Publishers (1969).

\bibitem{P}
L.E.\ Payne,
Inequalities for Eigenvalues of Membranes and Plates
Journal of Rational Mechanics and Analysis, {\bf 4}, 517-529 (1955).

\bibitem{QV}
L.~Quartapelle, F.~Valz-Gris,
Projection conditions on the vorticity in viscous incompressible flows,
Int.\ Journ.\ for Numerical Meth.\ in Fluids, {\bf 1}, 129--144 (1981).

\bibitem{S1}
V.A.\ Solonnikov, On the estimates of the tensor Green's function for
some boundary value problems, Dokl.\ Akad.\ Nauk SSSR, {\bf 130}, 988-991 (1960).

\bibitem{S2}
V.A.\ Solonnikov, On general boundary value problems for elliptic systems in
the sense of Douglis-Nirenberg, I, Izv.\ Akad.\ Nauk SSSR, ser.\ mat., {\bf 28}
665-706, (1964); II, Trudy Mat.\ Inst.\ Steklov, {\bf 92}, 233-297 (1966).

\bibitem{We}
A. Weinstein. \'Etude des spectres des \'equations aux d\'eriv\'ees
partielles de la th\'eorie des plaques \'elastiques, M\'emorial des Sciences
Math\'ematiques {\bf 88} (1937).

\bibitem{W}
S.A.\ Williams,
Analyticity of the Boundary for Lipschitz Domains without the Pompeiu Property,
Indiana University Mathematics Journal {\bf 30}, No. 3 pp. 357-369 (May–June, 1981).


\end{thebibliography}
\end{document}